\numberwithin{equation}{section}
\begin{document}

\theoremstyle{plain}
\newtheorem{theorem}{Theorem}[section]
\newtheorem{lemma}[theorem]{Lemma}
\newtheorem{proposition}[theorem]{Proposition}
\newtheorem{corollary}[theorem]{Corollary}
\newtheorem{conjecture}[theorem]{Conjecture}

\theoremstyle{definition}
\newtheorem*{definition}{Definition}

\theoremstyle{remark}
\newtheorem*{remark}{Remark}
\newtheorem{example}{Example}[section]
\newtheorem*{remarks}{Remarks}

\newcommand{\cc}{{\mathbf C}}
\newcommand{\qq}{{\mathbf Q}}
\newcommand{\rr}{{\mathbf R}}
\newcommand{\nn}{{\mathbf N}}
\newcommand{\zz}{{\mathbf Z}}
\newcommand{\pp}{{\mathbf P}}
\newcommand{\al}{\alpha}
\newcommand{\be}{\beta}
\newcommand{\ga}{\gamma}
\newcommand{\ze}{\zeta}
\newcommand{\om}{\omega}
\newcommand{\ep}{\epsilon}
\newcommand{\la}{\lambda}
\newcommand{\de}{\delta}
\newcommand{\De}{\Delta}
\newcommand{\Ga}{\Gamma}
\newcommand{\si}{\sigma}

\title{Congruence properties of binary partition functions}

\author{Katherine Anders}
\address{Department of Mathematics, University of 
Illinois at Urbana-Champaign, Urbana, IL 61801} 
\email{kaanders@math.uiuc.edu}

\author{Melissa Dennison}
\address{Department of Math and Computer Science, Baldwin-Wallace College,
Berea, Ohio 44017} 
\email{mdenniso@bw.edu}

\author{Bruce Reznick}
\address{Department of Mathematics, University of 
Illinois at Urbana-Champaign, Urbana, IL 61801} 
\email{reznick@math.uiuc.edu}

\author{Jennifer Weber}
\address{Department of Mathematics, University of 
Illinois at Urbana-Champaign, Urbana, IL 61801} 
\email{jlweber@illinois.edu}

\thanks{The first and fourth authors received support
from National Science Foundation grant DMS 0838434
”EMSW21MCTP: Research Experience for Graduate Students”. }

\keywords{partitions, digital representations, Stern sequence}
\subjclass[2000]{Primary:11A63, 11B50, 11P81}
\begin{abstract}
Let $\mathcal A$ be a finite subset of $\mathbb N$ containing 0, and let
$f(n)$ denote the number of ways to write $n$  in the form $\sum
\ep_j2^j$, where $\ep_j \in \mathcal A$. We show that there exists a
computable $T = 
T(\mathcal A)$ so that the sequence $(f(n)$ mod 2) is periodic with period
$T$. Variations and generalizations of this problem are also discussed. 
\end{abstract}
\date{\today}
\maketitle

\maketitle

\section{Introduction}

 Let $\mathcal A = \{0=a_0 < a_1 <\dots \}$ denote a finite or
 infinite  subset of $\mathbb
N$ containing 0, and fix an integer $b \ge 2$. Let $f_{\mathcal A,
  b}(n)$ denote the number of ways to write $n$ in the form
\begin{equation}\label{E:basicrep}
n = \sum_{k=0}^\infty \ep_k b^k, \qquad \ep_k \in \mathcal A.
\end{equation}
The uniqueness of the standard base-$b$ representation of $n \ge 0$
reflects the fact that 
$f_{\mathcal A, b}(n) = 1$ 
for $\mathcal A = \{0,\dots,b-1\}$. For non-standard
bases, the behavior of $f_{\mathcal A,b}(n)$ has been studied
primarily when $\mathcal A = 
\mathbb N$ or $b=2$, in terms of congruences at special values, and
also asymptotically. 
 In this paper, we are concerned with the behavior of 
$f_{\mathcal A,b}(n) \pmod{d}$, especially when $b=d=2$, and when
$\mathcal A$ is finite.

We associate to $\mathcal A$ its characteristic function
$\chi_{\mathcal A}(n)$, and the generating function
\begin{equation}\label{E:phi}
\phi_{\mathcal A}(x) := \sum_{n=0}^\infty \chi_{\mathcal A}(n)x^n =
\sum_{a \in  \mathcal A} x^a = 1 + x^{a_1} + \cdots. 
\end{equation}
Let
\begin{equation}\label{E:genfun}
F_{\mathcal A, b}(x): = \sum_{n=0}^\infty f_{\mathcal A,b}(n) x^n
\end{equation}
denote the generating function of $f_{\mathcal A, b}(n)$.
Viewing \eqref{E:basicrep} as a partition problem, we find an
immediate  infinite product representation for $F_{\mathcal A, b}(x)$:
\begin{equation}\label{E:infprod}
F_{\mathcal A, b}(x) = \prod_{k=0}^\infty \left(1 + x^{a_1b^k} +
  \cdots \right) = \prod_{k=0}^\infty \phi_{\mathcal A}(x^{b^k}).
\end{equation}

Observe that  \eqref{E:basicrep} implies that $n \equiv \ep_0
\pmod{b}$. Thus, every such representation may be rewritten as 
\begin{equation}\label{E:shift}
n =  \sum_{j=0}^\infty \ep_j b^j = \ep_0 + b\left(\sum_{j=0}^{\infty}
  \ep_{j+1}b^j\right).
\end{equation}
Since  $f_{\mathcal A, b}(n) = 0$
for $n < 0$, we see that \eqref{E:shift}  gives the recurrence
\begin{equation}\label{E:recur}
f_{\mathcal A, b}(n) = \sum_{\substack{a\in \mathcal A,\\ n \equiv a\text{ mod } b }}
f_{\mathcal A, b}\left(\tfrac{n-a}b\right), \quad \text{for} \quad n \ge 1.
\end{equation}
Alternatively, decompose $\mathcal A$ into residue classes mod $b$ and
write
\begin{equation}\label{E:slice}
\mathcal A = \bigcup_{i=0}^{b-1} \mathcal A_i, \quad \text{where} \quad
\mathcal A_i := \mathcal A \cap (b\mathbb Z +i).
\end{equation}
If we write $\mathcal A_i = \{ bv_{k,i} + i\}$, then for $m \ge 0$ and $0 \le
i \le b-1$: 
\begin{equation}\label{E:recur2}
f_{\mathcal A, b}(bm + i) = \sum_k f_{\mathcal A, b}(m-v_{k,i}).
\end{equation}
The initial condition $f_{\mathcal A,b}(0) = 1$, combined with
\eqref{E:recur} or \eqref{E:recur2}, is
sufficient to determine $f_{\mathcal A,b}(n)$ for all $n > 0$.

We say that a sequence $(u_n)$ is {\it eventually periodic} if there
exist integers $N \ge 0$, $T \ge 1$ so that, for $n \ge N$, $u_{n+T}=
u_n$. The {\it period} of an eventually periodic sequence is the
smallest such $T$. By extension, we say that the set $\mathcal A$ is {\it
  eventually periodic} if the sequence of its characteristic function,
$(\chi_{\mathcal A}(n))$, is
eventually periodic. Equivalently, $\mathcal A$ is eventually periodic
if there exists $T$, and integers $r_1,\dots,r_k$, $k \ge 0$, $0 \le
r_i \le T-1$, so that the symmetric set
difference of $\mathcal A$ and $\cup(T \mathbb N + r_i)$ is finite. In
particular, if  $\mathcal A$ is finite or the complement of a finite
set, then $\mathcal A$ is eventually periodic.

The principal result of this paper is the relationship of
$F_{\mathcal A,2}(x)$ and $\phi_{\mathcal A}(x)$ mod 2.
\begin{theorem}\label{MainTheorem}
As elements of  $\mathbb F_2[[x]]$,  
\begin{equation}\label{E:main}
F_{\mathcal A, 2}(x) \phi_{\mathcal A}(x) = 1.  
\end{equation} 
\end{theorem}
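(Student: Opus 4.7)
The plan is to derive a functional equation for $F_{\mathcal{A},2}(x)$ from the infinite product formula \eqref{E:infprod} and then exploit the Frobenius endomorphism of $\mathbb{F}_2[[x]]$, the fact that squaring is a ring homomorphism in characteristic 2.

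First, I would peel off the $k=0$ factor from
\[ F_{\mathcal{A},2}(x) = \prod_{k=0}^\infty \phi_{\mathcal{A}}(x^{2^k}) \]
to get $F_{\mathcal{A},2}(x) = \phi_{\mathcal{A}}(x) \prod_{k=1}^\infty \phi_{\mathcal{A}}(x^{2^k})$. Re-indexing $j = k-1$ and rewriting $x^{2^{j+1}} = (x^2)^{2^j}$, the remaining product is exactly $\prod_{j=0}^\infty \phi_{\mathcal{A}}((x^2)^{2^j}) = F_{\mathcal{A},2}(x^2)$. This yields the identity
\[ F_{\mathcal{A},2}(x) = \phi_{\mathcal{A}}(x)\, F_{\mathcal{A},2}(x^2), \]
valid already in $\mathbb{Z}[[x]]$.

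Second, passing to $\mathbb{F}_2[[x]]$, the Frobenius gives $F_{\mathcal{A},2}(x^2) = F_{\mathcal{A},2}(x)^2$. Substituting into the above yields
\[ F_{\mathcal{A},2}(x) = \phi_{\mathcal{A}}(x)\, F_{\mathcal{A},2}(x)^2. \]
Since $F_{\mathcal{A},2}(0) = f_{\mathcal{A},2}(0) = 1$, the series $F_{\mathcal{A},2}(x)$ is a unit in $\mathbb{F}_2[[x]]$; dividing both sides by $F_{\mathcal{A},2}(x)$ gives $\phi_{\mathcal{A}}(x) F_{\mathcal{A},2}(x) = 1$, as required.

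There is no serious obstacle here, only a small bookkeeping point: one must justify the re-indexing of the infinite product. This is legitimate because the product converges in the $x$-adic topology on $\mathbb{F}_2[[x]]$: for $\mathcal{A} \neq \{0\}$, the factor $\phi_{\mathcal{A}}(x^{2^k}) - 1$ has $x$-adic valuation at least $a_1 \cdot 2^k$, which tends to infinity, so regrouping and the substitution $x \mapsto x^2$ commute with the product. The degenerate case $\mathcal{A} = \{0\}$ is trivial, since both $F_{\mathcal{A},2}$ and $\phi_{\mathcal{A}}$ equal $1$.
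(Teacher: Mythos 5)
Your argument is correct and is essentially the paper's second proof of Theorem \ref{MainTheorem}: peel off the first factor of the infinite product to get $F_{\mathcal A,2}(x)=\phi_{\mathcal A}(x)F_{\mathcal A,2}(x^2)$, apply the Frobenius identity $F_{\mathcal A,2}(x^2)=F_{\mathcal A,2}(x)^2$ in $\mathbb F_2[[x]]$, and cancel the unit $F_{\mathcal A,2}(x)$ (which has constant term $1$). The paper also gives an alternative first proof via the combinatorial recurrence of Theorem \ref{rec}, but your route matches its second proof, with your remarks on $x$-adic convergence being a harmless extra justification.
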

Theorem \ref{MainTheorem} has an immediate corollary.
\begin{corollary}\label{MainCor}
\
\begin{enumerate}
\item If $\mathcal A$ is finite, then there is a computable
  integer $T = T(\mathcal A)>0$ so   that for all $n \ge 0$,  
$f_{\mathcal A,2}(n) \equiv f_{\mathcal A,2}(n+T)\pmod{2}$.
\item If $\mathcal A$ is infinite, the sequence $(f_{\mathcal A,2}(n)
  \pmod{2})$ is  eventually periodic if and only if $\phi_{\mathcal A}(x)$ is the
  power series of a rational function in $\mathbb F_2(x)$ if and only
  if  the set $\mathcal A$ is   eventually periodic.
\end{enumerate}
\end{corollary}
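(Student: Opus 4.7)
Both parts rest on Theorem \ref{MainTheorem}, which identifies $F_{\mathcal A,2}(x)$ with the inverse of $\phi_{\mathcal A}(x)$ in $\mathbb F_2[[x]]$. Everything else is bookkeeping with formal power series over a finite field.

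For part (1), I would argue as follows. Since $\mathcal A$ is finite, $\phi_{\mathcal A}(x)$ is a polynomial in $\mathbb F_2[x]$ with constant term $1$, so in the finite quotient ring $\mathbb F_2[x]/(\phi_{\mathcal A}(x))$ the residue of $x$ is a unit and has some multiplicative order $T$. Equivalently, $\phi_{\mathcal A}(x) \mid 1 + x^T$ in $\mathbb F_2[x]$; writing $h(x)\,\phi_{\mathcal A}(x) = 1+x^T$ with $\deg h < T$, Theorem \ref{MainTheorem} gives
\[
F_{\mathcal A,2}(x) \equiv \frac{h(x)}{1+x^T} \equiv h(x) \sum_{k \ge 0} x^{kT} \pmod{2},
\]
so the coefficient sequence $(f_{\mathcal A,2}(n) \bmod 2)$ is purely periodic starting at $n=0$ with period dividing $T$. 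The order $T$ is computable, for example by repeated squaring of $x$ modulo $\phi_{\mathcal A}(x)$.

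For part (2), the plan is to link the three conditions via the standard fact that a power series over $\mathbb F_2$ represents a rational function in $\mathbb F_2(x)$ if and only if its coefficient sequence is eventually periodic (one direction: eventually periodic sequences split as polynomial plus $x^N$ times $p(x)/(1+x^T)$; the other: a linear recurrence over $\mathbb F_2$ has only finitely many states, so a $\{0,1\}$-valued solution must eventually repeat). Applied to $F_{\mathcal A,2}(x)$ this translates the first condition into ``$F_{\mathcal A,2}$ is rational,'' and applied to $\phi_{\mathcal A}(x)$ it translates the third into ``$\phi_{\mathcal A}$ is rational.'' Theorem \ref{MainTheorem} then reconciles these with the middle condition: $\phi_{\mathcal A}$ is a unit in $\mathbb F_2[[x]]$ whose inverse is $F_{\mathcal A,2}$, and the rational power series form a subring of $\mathbb F_2[[x]]$ closed under inversion of units, so $\phi_{\mathcal A}$ is rational iff $F_{\mathcal A,2}$ is.

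\textbf{Main obstacle.} The substantive work has already been done in Theorem \ref{MainTheorem}; in the corollary itself, the subtlest point is part (1), where I need a period valid for all $n \ge 0$ and not merely eventual periodicity. This is not automatic from ``$\phi_{\mathcal A}$ is invertible in $\mathbb F_2[[x]]$''; it requires the stronger statement that $\phi_{\mathcal A}$ divides some $1+x^T$ in $\mathbb F_2[x]$, which in turn rests on $\phi_{\mathcal A}(x)$ being coprime to $x$ because $\phi_{\mathcal A}(0)=1$.
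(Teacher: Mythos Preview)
Your proposal is correct and follows essentially the same route as the paper: for (1) you use Theorem \ref{MainTheorem} together with the fact that $\phi_{\mathcal A}(x)\mid 1+x^T$ for some computable $T$ (you obtain $T$ as the order of $x$ in $\mathbb F_2[x]/(\phi_{\mathcal A})$, the paper via the factorization bound of Lemma \ref{divide}) to write $F_{\mathcal A,2}=q/(1+x^T)$ with $\deg q<T$ and read off pure periodicity; for (2) both you and the paper invoke the equivalence ``rational in $\mathbb F_2(x)$ $\Leftrightarrow$ eventually periodic coefficients'' (Lemma \ref{periodicity}) on each of $F_{\mathcal A,2}$ and $\phi_{\mathcal A}$, linked by Theorem \ref{MainTheorem}. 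Your emphasis on why pure (not merely eventual) periodicity holds in (1)---namely $\phi_{\mathcal A}(0)=1$---is apt and implicit in the paper's argument.
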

It will follow from Corollary \ref{MainCor}(1) that if $\mathcal A$ is a
finite set, and $T =  T(\mathcal A)$, then there is a {\it
  complementary} finite set $\mathcal A' = \{0=b_0  < b_1< \dots\}$ so that 
\begin{equation}\label{E:compleodd}
\begin{gathered}
f_{\mathcal A,2}(n)  \quad \text{is odd} \iff n \equiv b_k \pmod{T}
\quad \text{for some $b_k$}; \\
  f_{\mathcal A',2}(n)  \quad \text{is odd} \iff n \equiv a_k \pmod{T}
\quad \text{for some $a_k$}.
\end{gathered}
\end{equation}
Complementary sets needn't look very much alike.
If $\mathcal A = \{0,1,4,9\}$, then $T = 84$ and $|\mathcal A'|
= 41$, with elements ranging from 0 to 75 (see Example \ref{0149}).

One instance of Theorem \ref{MainTheorem} in the literature comes from 
the {\it Stern sequence} $(s(n))$ (see \cite{Ste, L, Re2}), which is defined by 
\begin{equation}\label{E:ste}
s(0) = 0, \ s(1) = 1; s(2n) = s(n),\ s(2n+1) = s(n) + s(n+1) \quad
\text{for $n \ge 1$}.   
\end{equation}
It was proved in \cite{Re1} that $s(n) = f_{\{0,1,2\},2}(n-1)$, under
which the recurrence \eqref{E:ste} is a  translation of 
\eqref{E:recur2}. It is easy to prove, and has basically been known since
\cite[p.197]{Ste}, that  $s(n)$ is even if and only if $n$ is a multiple of 
three. A simple application of Theorem \ref{MainTheorem} shows that in
$\mathbb F_2(x)$,
\begin{equation}
F_{\{0,1,2\},2}(x) = \frac 1{1+x+x^2} = \frac{1+x}{1+x^3} =
1+x+x^3+x^4+x^6+x^7 + \dots.
\end{equation}
This result was generalized in \cite[Th.2.14]{Re1}, using the infinite
product \eqref{E:infprod}. Here, let $\mathcal A_d =  \{0,\dots,d-1\}$. Then
$\phi_{\mathcal A_d}(x) = \frac{1-x^d}{1-x}$, so in $\mathbb F_2(x)$, 
 \begin{equation}\label{E:d}
F_{\mathcal A_d,2}(x) =  \frac{1+x}{1+x^d} =
1+x+x^d+x^{d+1}+x^{2d}+x^{2d+1} + \dots.
\end{equation}
Thus, $f_{\mathcal A_d,2}(n)$ is odd if and only if $n \equiv 0,1
\pmod{d}$. 
 
We also show that there is no obvious ``universal'' generalization of Theorem
\ref{MainTheorem} to $f_{\mathcal A,b}(n) \pmod{d}$ when $(b,d) \neq (2,2)$.
\begin{theorem}\label{neg}
\
\begin{enumerate}
\item If $(f_{\{0,1,2\},2}(n) \pmod{d})$ is eventually periodic with
  period $T$, then  $(d,T) = (2,3)$.
\item If $d \ge 2$ and $b \ge 3$, then $(f_{\{0,1\},b}(n) \pmod{d})$ is never
  eventually  periodic. 
\end{enumerate}
\end{theorem}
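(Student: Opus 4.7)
The plan for part (1) starts from the identity $f_{\{0,1,2\},2}(n) = s(n+1)$, where $s$ is the Stern sequence, and uses the formula
\[
s(2^k + m) = A_m + s(m)\,(k - j_m + 1), \qquad k \ge j_m,
\]
in which $j_m$ is the bit-length of $m$ and $A_m$, $s(m)$ are the entries of the top row of the matrix product $\bar M_m = M_{m_0}M_{m_1}\cdots M_{m_{j_m-1}}$, with $M_0 = \begin{pmatrix}1 & 0 \\ 1 & 1\end{pmatrix}$ and $M_1 = \begin{pmatrix}1 & 1 \\ 0 & 1\end{pmatrix}$; this follows by iterating the Stern recurrence $s(2n)=s(n)$, $s(2n+1)=s(n)+s(n+1)$. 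Suppose $(s(n+1)\bmod d)$ is eventually periodic with period $T$ for some $d\ge 2$. For $k$ so large that $m+T<2^k$, the shift $n\mapsto n+T$ at $n = 2^k + m$ becomes an identity of two linear functions of $k$ modulo $d$. Matching slopes recovers only the tautology $s(m)\equiv s(m+T)\pmod{d}$, while matching constants yields
\[
A_m - A_{m+T} \equiv s(m)(j_m - j_{m+T}) \pmod{d}.
\]

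Applying this at $m=0$ and $m=1$ (using $s(2^k)=1$ and $s(2^k+1)=k+1$) gives the clean necessary conditions $s(T)\equiv 0\pmod{d}$, $A_T\equiv 1\pmod{d}$, $s(T+1)\equiv 1\pmod{d}$, and $A_{T+1}\equiv j_{T+1}\pmod{d}$, with analogous conditions from $n=2^k-1$. The main step is then to combine these with further choices of $m$ to force $d\mid 2$ uniformly in $T$. A natural route is to study $\bar M_{2^j - T}$ for large $j$: the binary expansion of $2^j - T$ consists of a fixed low-order suffix determined by $T$ followed by a growing block of $1$'s, so $\bar M_{2^j - T}$ factors as (fixed matrix)$\cdot R^{j-s-1}$ with $R=M_1$ and $s=\lfloor \log_2 T\rfloor$. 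This makes $A_{2^j-T}$ eventually constant and $s(2^j-T)$ eventually linear in $j$, with slope equal to the $(1,1)$-entry of the fixed matrix. The constant-term relation at $m=2^j$ simultaneously pins down both the slope and intercept of that linear function modulo $d$; combined with the condition at $m=T+1$ this forces $d\mid 2$. Once $d=2$, the classical congruence $s(n)\equiv 0\pmod{2} \iff 3\mid n$ recalled in the introduction shows that the minimal period of $f\bmod 2$ is exactly $3$, so $(d,T)=(2,3)$. The hard part is the uniform closure step bringing $d$ down to a divisor of $2$ independently of $T$.

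Part (2) is short. For $b\ge 3$, the estimate $\sum_{k=0}^{K-1} b^k = (b^K-1)/(b-1) < b^K$ implies that any representation $n = \sum \ep_k b^k$ with $\ep_k\in\{0,1\}$ is unique when it exists, so $f_{\{0,1\},b}$ takes only the values $0$ and $1$ and equals the indicator function of the set $S_b$ of nonnegative integers all of whose base-$b$ digits lie in $\{0,1\}$. Reducing modulo any $d\ge 2$ does nothing to a $\{0,1\}$-valued sequence, so eventual periodicity of $(f_{\{0,1\},b}(n)\bmod d)$ is equivalent to eventual periodicity of $S_b$ itself. However, $|S_b\cap[0,b^K)|\le 2^K$ forces $S_b$ to have density zero, while $S_b$ contains every power of $b$ and is therefore infinite. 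Since an eventually periodic subset of $\mathbb N$ of density zero must be finite, this contradiction completes part (2).
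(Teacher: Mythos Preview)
Your argument for part~(2) is correct and in fact slightly cleaner than the paper's. The paper argues by explicit contradiction: if $T$ is the period, then $f(b^j)=1$ forces $f(b^j+T)=1$, whence $T=\sum_k b^{r_k}$ is a sum of distinct powers with $r_k<j$; but then $f(b^j+2T)=1$ would require $b^j+2\sum_k b^{r_k}$ to be a $\{0,1\}$-digit number, which it visibly is not since $2<b$. Your density observation bypasses this computation: $|S_b\cap[0,b^K)|=2^K$, so $S_b$ has density zero, yet $S_b$ is infinite, and an eventually periodic infinite set has positive density. Both routes are short; yours is perhaps more conceptual.

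Part~(1), however, has a genuine gap. You correctly derive the linear-in-$k$ formula $s(2^k+m)=A_m+s(m)(k-j_m+1)$ and the resulting necessary conditions $d\mid s(T)$, $A_T\equiv 1$, $s(T+1)\equiv 1$, $A_{T+1}\equiv j_{T+1}\pmod d$. But the step you label ``the hard part'' is never carried out: you assert that analyzing $\bar M_{2^j-T}$ and the constant-term relation at $m=2^j$ ``forces $d\mid 2$,'' yet no computation is shown, and it is not at all clear how to extract a relation of the form $d\mid 2$ from your conditions without already knowing something about $T$. For instance, at $m=2^j$ one has $s(2^j)=1$, $A_{2^j}=1$, $j_{2^j}=j+1=j_{2^j+T}$ for large $j$, so the constant relation collapses to $A_{2^j+T}\equiv 1\pmod d$, which does not obviously combine with $A_{T+1}\equiv j_{T+1}$ to bound $d$. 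The difficulty is that your conditions are constraints tying $d$ to quantities depending on $T$, and you need an extra idea to eliminate $T$.

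The paper avoids this by reversing the order: it first pins down $T$ using only the recurrence $f(2m)=f(m)+f(m-1)$, $f(2m+1)=f(m)$. If $T=2k$ is even, then $f(2m+2k+1)\equiv f(2m+1)$ gives $f(m+k)\equiv f(m)$, contradicting minimality of $T$. If $T=2k+1$ is odd, combining the relations at $2m+1$ and $2m+2$ yields $f(m+k)\equiv -f(m+1)\pmod d$, hence a period dividing $2k-2$, which contradicts minimality unless $k=1$, i.e.\ $T=3$. Only then does one compute $f(2^r+2)-f(2^r-1)=2r-2$ to conclude $d\mid 2$. This halving/parity descent on $T$ is the missing structural ingredient in your approach; without it, the matrix framework produces congruences but no closure.
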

Thus, the Stern sequence has no periodicities mod $d \ge 3$ and, there
exists $\mathcal A$ whose representations in any base $b \ge 3$ have
no periodicity modulo any $d \ge 2$.   

Let $\nu_2(m)$ denote the largest power of 2
dividing $m$. In 1969, Churchhouse \cite{C} conjectured, based on
numerical evidence,  that $f_{\mathbb N,2}(n)$ is
even for $n \ge 2$, that $4 \ | \ f_{\mathbb N,2}(n)$ if and only if either
$\nu_2(n-1)$ or $\nu_2(n)$ is a positive even integer, and that 8 never
divides  $f_{\mathbb N,2}(n)$. He also conjectured that, for all even
$m$, 
\begin{equation}\label{E:Church}
\nu_2(f_{\mathbb N,2}(4m)) - \nu_2(f_{\mathbb N,2}(m)) = \lfloor
\tfrac 32 \dot(3 \nu_2(m) + 4) \rfloor.
\end{equation}
This conjecture was proved in the next few years by R{\o}dseth, and by
Gupta and generalized  
by Hirschhorn and Loxton, R{\o}dseth, Gupta, Andrews, Gupta and
Pleasants, and most recently by  R{\o}dseth and Sellers \cite{RS}.
 We refer the reader to \cite{Re1,RS} for detailed references. 
The statements in Theorem \ref{neg}  about the non-existence of
recurrences do not apply to formulas such as \eqref{E:Church}. On the
other hand, $\phi_{\mathbb N,2}(x) = (1+x)^{-1}$, so Theorem
\ref{MainTheorem} implies that $f_{\mathbb N,2}(n)$ is
even for $n \ge 2$.

The paper is organized as follows. In section two, we review some
basic facts about polynomials and rational functions over $\mathbb
F_2$. In section three, we give two proofs of Theorem  
\ref{MainTheorem} and then prove Corollary \ref{MainCor}. 
In section four, we present several examples and
applications of Theorem \ref{MainTheorem}, as well as a proof of
Theorem \ref{neg}.

Portions of the research in this paper were contained in the Ph.D. dissertation
\cite{D} of the second author, written under the supervision of the third
author, and in the UIUC Summer 2010 Research Experiences for Graduate Students
(REGS) project \cite{AW}  of the first and fourth authors,
written under the supervision of the third author. 

The authors thank Bob McEliece for helpful correspondence.  

\section{Background}

There is an important relationship between rational functions in
$\mathbb F_2[[x]]$ and eventually periodic sequences. 
We first  recall some familiar facts about finite fields, identifying
$\mathbb Z/p\mathbb Z$ with $\mathbb F_p$ for prime $p$. The binomial 
theorem implies that for $a, b \in \mathbb F_p$, $(a+b)^p = a^p + b^p$,
hence $(\sum  a_i)^p = \sum a_i^p$. It follows from this fact and
Fermat's Little Theorem that for any polynomial $f \in \mathbb F_p[x]$,
\begin{equation}\label{E:froshdream}
f(x) = \sum_{j=0}^m a_j x^j \implies f(x)^p = f(x^p).
\end{equation}

If $f \in \mathbb F_2[x]$ is an irreducible polynomial of degree $d$
(so $f(0) \neq 0$), then it is well-known that $f(x) \ | \ 1 +
x^{2^d-1}$. Repeated application of \eqref{E:froshdream} for $p=2$
shows that $(1+x^M)^{2^k} = 1 + x^{2^k\cdot M}$, hence if $f$ is irreducible
and $j \le 2^k$, then $f(x)^j \ | \ 1+x^{2^k\cdot (2^d-1)}$. 
This leads immediately to the following lemma  (see
\cite[Thm.6.21]{B}):  

\begin{lemma}\label{divide}
Suppose $h \in \mathbb F_2[x]$, $h(0) \neq 0$ and $h$ can be factored
over $\mathbb F_2[x]$ as  
\begin{equation}
h = \prod_{i=1}^s f_i^{e_i},
\end{equation}
where the $f_i$ are distinct irreducible polynomials with $\deg(f_i) =
d_i$, and suppose  $2^k \ge e_i$ for all $i$ and some $k \in \mathbb N$. Then 
\begin{equation}\label{E:upbd}
h(x) \ | \ 1 + x^M, \text{ where}\quad M := M(h) = 2^k \cdot
\text{lcm}(2^{d_1}-1,\dots, 2^{d_s}-1). 
\end{equation}
\end{lemma}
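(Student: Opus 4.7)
The plan is to build up the divisibility $h(x) \mid 1 + x^M$ one irreducible factor at a time, leveraging the two facts highlighted just before the lemma statement: (i) an irreducible $f_i$ of degree $d_i$ with $f_i(0) \neq 0$ divides $1 + x^{2^{d_i}-1}$, and (ii) the Frobenius identity \eqref{E:froshdream} in characteristic $2$ gives $(1+x^N)^{2^k} = 1 + x^{2^k N}$.

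First I would fix $i$ and show $f_i(x) \mid 1 + x^{L}$, where $L := \text{lcm}(2^{d_1}-1,\dots,2^{d_s}-1)$. Since $(2^{d_i}-1) \mid L$, write $L = (2^{d_i}-1) q_i$; then $1 + x^{2^{d_i}-1}$ divides $1 + x^L$ (either by direct division, or by noting that any root of $1+x^{2^{d_i}-1}$ in an algebraic closure is also a root of $1+x^L$), and combining with fact (i) yields $f_i(x) \mid 1 + x^L$. Next I would raise this to the $2^k$ power and apply (ii): $f_i(x)^{2^k}$ divides $(1+x^L)^{2^k} = 1 + x^{2^k L} = 1 + x^M$. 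Since the hypothesis gives $e_i \le 2^k$, we get $f_i(x)^{e_i} \mid f_i(x)^{2^k} \mid 1 + x^M$.

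Finally I would assemble these local divisibilities into the global one. Because the $f_i$ are distinct irreducibles in the PID $\mathbb{F}_2[x]$, the powers $f_i^{e_i}$ are pairwise coprime, so their product $h = \prod_{i=1}^s f_i^{e_i}$ divides $1 + x^M$, as required.

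No single step looks difficult: the Frobenius identity and the well-known divisibility of $1 + x^{2^d-1}$ by degree-$d$ irreducibles are already stated in the excerpt, and the coprimality gluing is standard. The only point requiring a small amount of care is the intermediate divisibility $1 + x^{2^{d_i}-1} \mid 1 + x^L$, which I would dispatch by the elementary observation that $y - 1 \mid y^{q_i} - 1$ evaluated at $y = x^{2^{d_i}-1}$ (remembering that signs vanish in $\mathbb{F}_2$).
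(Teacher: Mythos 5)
Your proposal is correct and follows essentially the same route as the paper, which derives the lemma "immediately" from exactly the two facts you cite: that an irreducible of degree $d$ divides $1+x^{2^d-1}$, and that the Frobenius identity gives $f^{j}\mid 1+x^{2^k(2^d-1)}$ for $j\le 2^k$, after which the lcm and coprimality gluing are implicit. Your only cosmetic deviation is passing to the lcm before raising to the $2^k$ power rather than after, which changes nothing.
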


Suppose $h \in \mathbb F_2[x]$ and $h(0) = 1$.  The {\it period} of $h$ is the
smallest $T \ge 1$ so that $h(x) \ | \ 1+x^T$; this definition does
not assume that $h$ is irreducible. The period of $h$ can
be much smaller than $M(h)$, however it is always a divisor of $M(h)$.
\begin{lemma}\label{ideal}
If $h$ has period $T$, then $h(x) \ | \ 1+x^V$ in $\mathbb F_2[x]$ 
if and only if $T \ | \ V$.
\end{lemma}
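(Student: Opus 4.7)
The plan is to prove the two directions separately using the standard Euclidean algorithm trick that also establishes the analogous fact for orders of group elements.

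For the ``if'' direction, suppose $T \mid V$, so $V = kT$ for some $k \ge 1$. In $\mathbb{F}_2[x]$ (indeed in any characteristic, but the signs disappear here), we have the factorization
\begin{equation*}
1 + x^{kT} = (1 + x^T)\bigl(1 + x^T + x^{2T} + \cdots + x^{(k-1)T}\bigr),
\end{equation*}
so any divisor of $1 + x^T$ also divides $1 + x^V$. In particular $h(x) \mid 1 + x^V$.

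For the ``only if'' direction, suppose $h(x) \mid 1 + x^V$. Apply the division algorithm to write $V = qT + r$ with $0 \le r < T$. By the ``if'' direction just proved, $h(x) \mid 1 + x^{qT}$, and multiplying by $x^r$ gives $h(x) \mid x^r + x^{qT+r} = x^r + x^V$. Adding this to $h(x) \mid 1 + x^V$ (recall we are in characteristic 2, so addition equals subtraction) yields $h(x) \mid 1 + x^r$. Since $0 \le r < T$ and $T$ is by definition the \emph{smallest} positive integer with $h(x) \mid 1 + x^T$, the only possibility is $r = 0$, i.e.\ $T \mid V$.

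There is no real obstacle here; the only point requiring a bit of care is handling the remainder $r$ correctly, in particular the boundary case $r = 0$ (where $1 + x^r = 0$ and divisibility is trivial) versus $r \ge 1$ (where minimality of $T$ forces a contradiction). The proof uses no more than the definition of period and the characteristic-2 identity for $1 + x^{kT}$; no appeal to Lemma \ref{divide} or to the factorization of $h$ is needed.
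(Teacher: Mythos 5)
Your proof is correct and follows essentially the same route as the paper's: the factorization $1+x^{kT}=(1+x^T)(1+x^T+\cdots+x^{(k-1)T})$ for the easy direction, and for the converse the identity $x^r(1+x^{qT})+(1+x^V)=1+x^r$ combined with minimality of $T$ to force $r=0$. No substantive difference.
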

\begin{proof}
We first note that $(1+x^T) \ |\ (1 + x^{kT})$, proving one
direction. For the other, suppose $h(x) \ | \ 1+x^V$; then $V \ge T$.  
Write $V = kT + r$, where $0 \le r \le T-1$. Then
$h(x)$ also divides
\begin{equation}
x^r(1+x^{kT}) + 1 + x^V = 1 + x^r,
\end{equation}
which violates the minimality of $T$ unless $r=0$.
\end{proof}

If $h \in \mathbb F_2[x]$ is irreducible, $\deg h = r$ and the
period of $h$ is $2^r-1$, then  $h$ is called {\it primitive}.
Primitive trinomials have attracted much recent interest,
especially when $2^r-1$ is a Mersenne prime (see \cite{BZ});  Lemma
\ref{divide} implies that all such irreducible $h$ are primitive.
In coding theory, $h$ is called the {\it generator} polynomial and
\begin{equation}
q(x)  = \frac{1 + x^T}{h(x)}
\end{equation} 
is called the {\it parity-check} polynomial.

Consider a rational function in $\mathbb F_2(x)$: 
\begin{equation}\label{E:Euc}
 \frac {g(x)}{h(x)} = a(x) + \frac{r(x)}{h(x)},
\end{equation}
 where $g,h,a,r$ are polynomials, and $\deg r < \deg h$.  We make the
 additional assumption that $h(0) \neq 0$.
Lemma \ref{divide} leads to an important relationship
between rational functions and eventually periodicity.
 
\begin{lemma}\label{periodicity}
Suppose  $b(x) = \sum b_nx^n \in \mathbb F_2[[x]]$ with $b_0 =
1$. Then $b(x)$ is a 
rational function if and only if  $\{n:b_n = 1\}$ is eventually periodic.
\end{lemma}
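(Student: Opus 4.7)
The plan is to prove both implications, leveraging Lemma \ref{divide} and the geometric series expansion in $\mathbb F_2[[x]]$. A useful preliminary observation is that, since coefficients in $\mathbb F_2$ are $0$ or $1$, the condition that $\{n : b_n = 1\}$ is eventually periodic is equivalent to the sequence $(b_n)$ itself being eventually periodic as a sequence of elements of $\mathbb F_2$.

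For the easy direction ($\Leftarrow$), I assume $(b_n)$ is eventually periodic with period $T$ from some index $N$ onward, and split $b(x) = p(x) + x^N c(x)$, where $p(x) = \sum_{n < N} b_n x^n$ is a polynomial and $c(x) = \sum_{n \ge 0} b_{N+n} x^n$ has purely periodic coefficient sequence of period $T$. In $\mathbb F_2[[x]]$,
\begin{equation*}
c(x) = \Bigl(\sum_{i=0}^{T-1} b_{N+i}\, x^i\Bigr) \sum_{k \ge 0} x^{kT} = \frac{r(x)}{1+x^T}
\end{equation*}
for a polynomial $r(x)$ of degree less than $T$, so $b(x)$ is rational.

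For the harder direction ($\Rightarrow$), I write $b(x) = g(x)/h(x)$ in lowest terms. Since $b_0 = 1$, we must have $g(0) = h(0) = 1$; in particular $h(0) \neq 0$, so Lemma \ref{divide} supplies an integer $M \ge 1$ with $h(x) \mid 1 + x^M$. Multiplying numerator and denominator by $(1+x^M)/h(x) \in \mathbb F_2[x]$ puts $b(x)$ in the form $G(x)/(1+x^M)$ for some polynomial $G$. Euclidean division as in \eqref{E:Euc} gives $G(x) = a(x)(1+x^M) + r(x)$ with $\deg r < M$, so $b(x) = a(x) + r(x)/(1+x^M)$. Expanding $1/(1+x^M) = \sum_{k \ge 0} x^{kM}$ in $\mathbb F_2[[x]]$ shows that the coefficient sequence of $r(x)/(1+x^M)$ is purely periodic of period dividing $M$, and adding the polynomial $a(x)$ perturbs only finitely many terms. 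Hence $(b_n)$ is eventually periodic, and so is its support set.

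The only subtle step is the reduction from an arbitrary rational denominator to the canonical form $1+x^M$, which is precisely what Lemma \ref{divide} provides; after that, everything reduces to routine geometric series manipulation in $\mathbb F_2[[x]]$. I expect no serious obstacle beyond carefully tracking the polynomial versus proper-rational splitting.
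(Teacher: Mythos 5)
Your proposal is correct and follows essentially the same route as the paper: the reverse direction amounts to observing that $(1+x^T)b(x)$ is a polynomial (you phrase it via a geometric-series expansion of the purely periodic tail, the paper by direct coefficient cancellation), and the forward direction uses Lemma \ref{divide} to replace the denominator $h$ by $1+x^M$ exactly as the paper does. No gaps.
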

\begin{proof}
First suppose there exists $T, N$ so that $b_n = b_{n+T}$ for $n \ge N$. 
Then the coefficient of  $x^{n+T}$ in 
\begin{equation}\label{E:rec2}
(1+x^T)\left(\sum_{n=0}^\infty b_n x^n\right)
\end{equation}
is $b_{n+T} + b_{n}=0$ for $n \ge N$. Hence, $b(x)$ is
the quotient of a polynomial of degree $< N$ and $1 + x^T$, and 
is thereby
a rational function. Conversely, suppose $b = \frac gh$ is rational  and
is given by \eqref{E:Euc} with $h(0) = 1$.  Then by Lemma
\ref{divide}, there exists $q(x) \in \mathbb F_2[x]$ and $T$ so that 
 \begin{equation}
b(x) = a(x) + \frac{r(x)}{h(x)} = a(x) + \frac{r(x)q(x)}{1 + x^T},
\end{equation}
hence $(1+x^T)b(x)$ is a polynomial of degree $< N$ (say), so $b_n =
b_{n+T}$ for $n \ge N$. 
\end{proof}

\section{Proofs}
We start this section with  two proofs of Theorem \ref{MainTheorem}. The
first one is somewhat longer, but yields a recurrence of independent
interest. 

As in \eqref{E:slice}, write
\begin{equation}
\begin{gathered}
\mathcal A = \{0=a_0  <a_1 < \cdots \} = \mathcal A_0 \cup \mathcal A_1; \\
\mathcal A_0 = \{0 = 2b_0 < 2b_1 < \cdots \}, \qquad \mathcal A_1
= \{2c_1 + 1< \cdots \}.  
\end{gathered}
\end{equation}
We will write $f_{\mathcal A, 2}(n)$ as $f(n)$ when
there is no ambiguity.
By \eqref{E:recur2}, we have:
\begin{equation}\label{E:bc}
f(2n) = \sum_i f(n-b_i), \qquad f(2n+1) = \sum_j f(n-c_j).
\end{equation}

\begin{theorem}\label{rec}
For all $n \in \mathbb Z$, $n \neq 0$,
\begin{equation}
\Theta(n):= \sum_k f(n - a_k) \equiv 0 \pmod{2}.
\end{equation}  
\end{theorem}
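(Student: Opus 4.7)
The plan is to prove $\Theta(n) \equiv 0 \pmod{2}$ by strong induction on $n > 0$. The case $n < 0$ is trivial, since each $a_k \ge 0$ forces $n - a_k < 0$ and hence $f(n - a_k) = 0$. The engine of the induction is to expand every $f(n - a_k)$ using \eqref{E:bc}, treating the even and odd elements of $\mathcal{A}$ separately, and then to harvest a symmetry-based cancellation in $\mathbb{F}_2$.

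For the odd case $n = 2m+1$, an even $a_k = 2b_i$ gives $n - a_k = 2(m - b_i) + 1$, so \eqref{E:bc} yields $f(n-a_k) = \sum_\ell f(m - b_i - c_\ell)$; and an odd $a_k = 2c_j + 1$ gives $n - a_k = 2(m - c_j)$, so \eqref{E:bc} yields $f(n-a_k) = \sum_\ell f(m - c_j - b_\ell)$. Summing over $k$ then produces
\begin{equation*}
\Theta(2m+1) = \sum_{i,\ell} f(m - b_i - c_\ell) + \sum_{j,\ell} f(m - c_j - b_\ell),
\end{equation*}
and the two double sums range over the same set of pairs $(b_i, c_\ell)$ with the indices merely renamed; they coincide term by term and cancel modulo $2$.

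For the even case $n = 2m$, the analogous parity split produces
\begin{equation*}
\Theta(2m) = \sum_{i,\ell} f(m - b_i - b_\ell) + \sum_{j,\ell} f(m - c_j - c_\ell - 1).
\end{equation*}
Each double sum is symmetric in its two indices, so off-diagonal pairs $(i,\ell)$ with $i \ne \ell$ (respectively $j \ne \ell$) cancel modulo $2$, leaving only the diagonal contribution $\sum_i f(m - 2 b_i) + \sum_j f(m - 2 c_j - 1)$. Reassembling $\mathcal{A}$ from its even and odd residue classes identifies this diagonal with $\sum_k f(m - a_k) = \Theta(m)$, so $\Theta(2m) \equiv \Theta(m) \pmod{2}$. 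Strong induction then finishes the proof, since odd $n > 0$ is handled directly and even $n = 2m > 0$ reduces to $\Theta(m)$ with $0 < m < n$; each chain of halvings eventually hits an odd positive integer.

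The main obstacle I anticipate is precisely the combinatorial bookkeeping: one must carefully split $\mathcal{A}$ by parity, apply the correct branch of \eqref{E:bc} to each $f(n-a_k)$, and then spot the two distinct symmetries (index swap across the two sums in the odd case, within each sum in the even case) that cause the off-diagonal terms to vanish in characteristic $2$. Once those cancellations are recognized, the recursion $\Theta(2m) \equiv \Theta(m)$ and $\Theta(2m+1) \equiv 0$ drops out and the induction is automatic.
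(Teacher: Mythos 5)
Your proposal is correct and follows essentially the same route as the paper: split $\mathcal A$ by parity, substitute \eqref{E:bc} into each term of $\Theta(n)$, cancel the off-diagonal (respectively, cross-sum) terms in $\mathbb{F}_2$ to get $\Theta(2m)\equiv\Theta(m)$ and $\Theta(2m+1)\equiv 0$, and conclude by induction. Your explicit remark that every chain of halvings terminates at an odd positive integer is a nice clarification of the induction's base case, but the argument is otherwise identical.
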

\begin{proof}
If $n < 0$, then $f(n) = 0$, so this is immediate; also  $\Theta(0) = f(0)
= 1$. Suppose  $n > 0$. We distinguish two cases:
$n=2m$ and $n=2m+1$, and  put \eqref{E:bc} back into itself,
diagonalizing the double sums below; for each fixed $m$, these sums are
finite:
\begin{equation}
\begin{gathered}
\Theta(2m) = \sum_k f(2m - a_k) = \sum_i f(2m-2b_i) +
\sum_j f(2m-2c_j-1) \\ 
= \sum_i \sum_u f(m-b_i-b_u) + \sum_j \sum_v f(m-c_j-1-c_v) = \\
\sum_i f(m-2b_i) + 2\sum_{i < u} f(m-b_i-b_u)  \\
+ \sum_j f(m-2c_j-1)+ 2\sum_{j < v} f(m-c_j-c_v-1) \equiv \Theta(m) \pmod{2}.
\end{gathered}
\end{equation}
Similarly,
\begin{equation}
\begin{gathered}
\Theta(2m+1) = \sum_k f(2m+1 - a_k) = \sum_i f(2m+1-2b_i)
+ \sum_j f(2m-2c_j) \\ 
= \sum_i \sum_j f(m-b_i-c_j) + \sum_j \sum_i
f(m-c_j-b_i) = \\
2\sum_{i,j} f(m-b_i-c_j) \equiv 0 \pmod{2}.
\end{gathered}
\end{equation}
Since $\Theta(2m) \equiv \Theta(m)$ and 
$\Theta(2m+1) \equiv 0$, $\Theta(m) \equiv 0$ for $m \ge 1$ by
induction.
\end{proof}

We now give two proofs of Theorem \ref{MainTheorem}. The first uses
Theorem \ref{rec} and the second uses the generating function
\eqref{E:genfun}. 
\begin{proof}[First proof of Theorem \ref{MainTheorem}]
Write out the product in \eqref{E:main} and use Theorem \ref{rec}.
\begin{equation}\label{E:yes}
F_{\mathcal A, 2}(x) \phi_{\mathcal A}(x) = \left(\sum_{n=0}^\infty f(n)x^n\right)
\left(1 + \sum_{i \ge 1} x^{a_i} \right) = \sum_{n=0}^\infty \Theta(n)x^n \equiv 1.
\end{equation}
\end{proof}
\begin{proof}[Second proof of Theorem \ref{MainTheorem}]
By repeated use of \eqref{E:infprod} and \eqref{E:froshdream},
\begin{equation}
\phi_{\mathcal A}(x)F_{\mathcal A,2}^2(x) \equiv 
\phi_{\mathcal  A}(x)F_{\mathcal A,2}(x^2) = \phi_{\mathcal  A}(x) 
\prod_{k=0}^\infty \phi_{\mathcal A}(x^{2^{k+1}}) = F_{\mathcal A,2}(x).
 \end{equation}
\end{proof}

The second proof generalizes immediately to primes $p > 2$ as a result
of \eqref{E:froshdream}.  
\begin{theorem}\label{genp}
If $b=p$ is prime, then 
$F_{\mathcal A, p}^{p-1}(x) \phi_{\mathcal A}(x) = 1 \in \mathbb F_p[x]$.
\end{theorem}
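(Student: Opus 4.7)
The plan is to mimic the second proof of Theorem~\ref{MainTheorem} essentially verbatim, with the only change being that the Frobenius identity \eqref{E:froshdream} now produces a $p$-th power instead of a square. The statement of Theorem~\ref{genp} should be interpreted inside $\mathbb{F}_p[[x]]$ (since $F_{\mathcal{A},p}$ is only a formal power series in general), and the key input is that $F_{\mathcal{A},p}(0)=1$, so $F_{\mathcal{A},p}$ is a unit in $\mathbb{F}_p[[x]]$ and we may divide by it.

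First I would separate out the $k=0$ factor of the infinite product \eqref{E:infprod} to obtain the factorization
\begin{equation}
F_{\mathcal A,p}(x) \;=\; \phi_{\mathcal A}(x) \cdot \prod_{k=0}^\infty \phi_{\mathcal A}(x^{p^{k+1}}) \;=\; \phi_{\mathcal A}(x)\, F_{\mathcal A,p}(x^p),
\end{equation}
which holds as an identity in $\mathbb{Z}[[x]]$ and therefore also in $\mathbb{F}_p[[x]]$. Next I would apply \eqref{E:froshdream} term-by-term to the power series $F_{\mathcal A,p}$ (valid for formal power series over $\mathbb{F}_p$ by passing to partial sums and taking the limit coefficient-wise) to get
\begin{equation}
F_{\mathcal A,p}(x)^p \;\equiv\; F_{\mathcal A,p}(x^p) \pmod{p}.
\end{equation}

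Combining these two identities in $\mathbb{F}_p[[x]]$ gives
\begin{equation}
F_{\mathcal A,p}(x) \;\equiv\; \phi_{\mathcal A}(x)\, F_{\mathcal A,p}(x)^p \pmod{p}.
\end{equation}
Since $F_{\mathcal A,p}(0)=1$, the series $F_{\mathcal A,p}(x)$ is a unit in $\mathbb{F}_p[[x]]$, so I may cancel one factor of $F_{\mathcal A,p}(x)$ from each side to conclude
\begin{equation}
\phi_{\mathcal A}(x)\, F_{\mathcal A,p}(x)^{p-1} \;=\; 1 \quad \text{in } \mathbb{F}_p[[x]],
\end{equation}
as desired.

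There is no real obstacle; the only subtlety worth checking is the legitimacy of applying the Frobenius identity to an \emph{infinite} sum, which is routine for formal power series because the coefficient of each $x^n$ in $F_{\mathcal A,p}(x)^p$ is determined by only finitely many coefficients of $F_{\mathcal A,p}$, and on those finite truncations \eqref{E:froshdream} applies directly. The case $p=2$ recovers Theorem~\ref{MainTheorem}, since then $p-1=1$.
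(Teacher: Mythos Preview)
Your proof is correct and follows essentially the same approach as the paper's: use the Frobenius identity $F_{\mathcal A,p}(x)^p\equiv F_{\mathcal A,p}(x^p)$ together with the infinite product \eqref{E:infprod} to obtain $\phi_{\mathcal A}(x)F_{\mathcal A,p}(x)^p=F_{\mathcal A,p}(x)$, then cancel a unit factor of $F_{\mathcal A,p}(x)$. Your write-up simply makes the cancellation step and the formal-power-series justification more explicit than the paper does.
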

\begin{proof}
As before, we have
\begin{equation}
\phi_{\mathcal A}(x)F_{\mathcal A,p}^p(x) = 
\phi_{\mathcal  A}(x)F_{\mathcal A,p}(x^p) = \phi_{\mathcal  A}(x) 
\prod_{k=0}^\infty \phi_{\mathcal A}(x^{p^{k+1}}) = F_{\mathcal A,p}(x).
 \end{equation}
\end{proof}
This result  may  fail if $b$ is not prime. For
example, if $\mathcal A = \{0,1\}$ and $b=4$, then $\phi_A(x) = 1+x$
and the coefficient of $x^2$ in $F_{\mathcal A, 4}^3(x) \phi_{\mathcal A}(x)$ is $6
\not\equiv 0 \mod 4$. Note also that Theorem \ref{genp} implies that
$F_{\mathcal A, 
  p}(x) = \phi_{\mathcal A}^{-1/(p-1)}(x) \in \mathbb F_p[[x]]$. 

\begin{proof}[Proof of Corollary \ref{MainCor}(1)]
Suppose $\mathcal A$ is finite and $T$ is the period of
$\phi_{\mathcal A} (x)$.  Then by
 Theorem \ref{MainTheorem} and Lemma \ref{divide}, we have in $\mathbb F_2[x]$
\begin{equation}\label{E:comp}
F_{\mathcal A, 2}(x) = \frac 1{\phi_{\mathcal A}(x)} =
\frac{q(x)}{1+x^T},
\end{equation}
where $(1+x^T)F_{\mathcal A, 2}(x)= q(x) = 1 + \sum x^{b_k}$ and
$\deg(q) < T$. Since the coefficient of $x^{n+T}$ in
$q$ is $f(n+T) - f(n)=0$,  $(f(n) \pmod{2})$ is periodic with period $T$.
\end{proof}

Let $\mathcal A' = \{0 = b_0< b_1 < ...\}$ denote the (finite) set of
exponents which occur in $q$ in \eqref{E:comp}; $q(x) =
\phi_{\mathcal A'}(x)$. It follows from Theorem \ref{MainTheorem} that 
\begin{equation}\label{E:comp2}
F_{\mathcal A',2}(x) = \frac 1{\phi_{\mathcal A'}(x)} = \frac 1{q(x)}
= \frac{\phi_{\mathcal A}(x)}{1+x^T}.
\end{equation}
Equation \eqref{E:compleodd}  now follows from \eqref{E:comp} and
\eqref{E:comp2}. One might hope that 
$(\mathcal A')' = \mathcal A$, but that will not be the case if
$\mathcal A'$ has a smaller period than $\mathcal A$. For example, if
$\mathcal A_d =  \{0,\dots,d-1\}$, then $\phi_{\mathcal A_d}(x)(1+x) =
1+x^d$, so for each $d$, $\mathcal A_d' = \{0,1\}$. In terms of
\eqref{E:compleodd}, $f_{\mathcal A_d,2}(n)$ is odd if and only if $n
\equiv 0,1 \pmod{d}$ (as proved in \cite{Re1}) and 
$f_{\mathcal  A_d',2}(n)$ is odd if and only if $n \equiv 0,1,\dots,d-1
\pmod{d}$. That is, $f_{\mathcal  A_d',2}(n)$ is odd for all $n \ge 0$,
which is true, because it always equals 1.

  Since $(f_{\mathcal A,2}(n)  \pmod{2})$ is periodic, it is natural to ask
 for the proportion of even and odd values. It follows immediately
 from \eqref{E:compleodd} that the density of $n$ for which $f_{\mathcal
   A}(n)$ is odd is equal to $|\mathcal A'|/T$.   
 Computations with small examples lead to the conjecture that $|
\mathcal  A '| \le \frac{T+1}2$. This conjecture is false. The smallest
such example we have found is $\mathcal A_0 = \{0,1,5,9,10\}$. It turns
 out that the period of
 $\mathcal A_0$ is $33$ and $|\mathcal A_0'| = 18 >  \frac{33+1}2$. On
 the other hand, it is not hard to 
 show that if $\phi_{\mathcal A}$ is primitive, then $|\mathcal A'|
 =\frac{T+1}2 $. We  hope to say more about this topic in a future publication.

\begin{proof}[Proof of Corollary \ref{MainCor}(2)]
By Lemma \ref{periodicity}, if $\mathcal A$ is infinite, then
$(f_{\mathcal A,2}(n) \pmod{2})$ is eventually periodic if and only if
$F_{\mathcal A,2}(x)$ is a rational function, and by Theorem
\ref{MainTheorem}, this is so if and only if $\phi_{\mathcal A}(x)$
is a rational function. Suppose
\begin{equation}\label{E:end}
\phi_{\mathcal A}(x) = a(x) + \frac{q(x)}{1+x^T} \in \mathbb F_2(x), 
\end{equation}
where $a, q \in \mathbb F_2[x]$, $\deg a < N$ and $\deg q < T$ and
$q(x) = 1 + \sum_i x^{b_i}$. Then for $m > N$, $m \in \mathcal A$ if
and only if $x^m$ appears in $\phi_{\mathcal A}(x)$. By
\eqref{E:end}, this holds if and only if there exists $b_i \in
\mathcal A'$ so that $N \equiv b_i \pmod{T}$.
\end{proof}

We conclude this section with the proofs of Theorem \ref{neg}(1) and (2). 
\begin{proof}[Proof of  Theorem \ref{neg}(1)]
Let $f(n):= f_{\{0,1,2\},2}(n)$ and suppose $f(n+T) \equiv f(n) \pmod{d}$ for all
sufficiently large $n$, where $T$ is minimal.  By \eqref{E:recur2}, 
\begin{equation}
f(2m) = f(m) + f(m-1); \quad f(2m+1) = f(m)
\end{equation}
for all $m$. If $T = 2k$ is even, then for all sufficiently large $m$,
\begin{equation}
\begin{gathered}
f(2m+2k+1) \equiv f(2m+1) \pmod{d}  \implies 
f(m+k) \equiv f(m) \pmod{d},
\end{gathered}
\end{equation}
violating the minimality of $T$. 

If $T = 2k+1$ is odd, then for all sufficiently large $m$, 
\begin{equation}
\begin{gathered}
f(2m+2k+2) \equiv f(2m+1) \pmod{d} \implies \\
f(m+k+1) + f(m+k) \equiv f(m) \pmod{d},
\end{gathered}
\end{equation}
and 
\begin{equation}
\begin{gathered}
f(2m+2k+3) \equiv f(2m+2) \pmod{d} \implies \\
f(m+k+1) \equiv f(m) + f(m+1) \pmod{d}.
\end{gathered}
\end{equation}
Together, these imply that for all sufficiently large $m$, 
\begin{equation}
f(m+k) \equiv - f(m+1) \pmod{d},
\end{equation}
which implies that $f$ has a period of $2k-2$. If $k > 1$, then $0 <
2k-2 < 2k+1$ 
gives a contradiction. If $k=1$, then $T=3$. We now show that $d=2$.
First, $f(2^r-1) = f(2^{r-1}-1)$ and so by induction, $f(2^r-1) = f(1)
= 1$. Thus,  
$f(2^r) = f(2^{r-1}) + f(2^{r-1}-1) =    f(2^{r-1}) + 1$ and so by
induction, $f(2^r) = r+1$, implying that $f(2^r + 1) = f(2^{r-1}) = r$ and
$f(2^r+2) = f(2^{r-1}) + f(2^{r-1}+1) = r+r-1$. Thus, $d$ divides
$f(2^r+2) - f(2^r - 1) = 2r -1 -1$ for all sufficiently large $r$,
hence $d=2$.  
\end{proof}

\begin{proof}[Proof of  Theorem \ref{neg}(2)]
Suppose $\mathcal A = \{0,1\}$ and $b \ge 3$.  Then $f(n):= f_{\mathcal
  A,b}(n) = 1$ if $n$ is a sum of distinct powers of $b$,
and 0 otherwise. Suppose that for $n > U$, 
\begin{equation}\label{E:alleged}
f(n+T) \equiv f(n) \pmod{d}.
\end{equation}
and $d > 1$. Then, $f(m) \in \{0,1\}$ implies that $f(n+T) = f(n)$. 
Now pick $j$ so large that
$b^j > T, U$ and suppose that $f$
satisfies \eqref{E:alleged}. Then $f(b^j) = 1$, hence $f(b^j+T) = 1$,
and so $T = \sum_k b^{r_k}$ with distinct $r_k < j$.  But then $f(b^j
+ 2T) = 1$ by periodicity,
 and so $b^j + 2\sum_k b^{r_k}$ must be also a sum of distinct
powers of $b$, violating the uniqueness of the  (standard) base-$b$
representation. 
\end{proof}

\section{Examples}

\begin{example}\label{013}
The periodicity of $f_{\mathcal A_d,2}(n)$ was
established in \cite{Re1}, motivated by the interpretation of the
Stern sequence. In her dissertation, the second author \cite{D}
studied a variation on the Stern sequence defined by flipping the
recurrence \eqref{E:ste} to a two-parameter family of sequences.
 The periodicities discovered in \cite{D} for $\mathcal A = \{0,1,3\}$
 and  $\mathcal A = \{0,2,3\}$ led the third author to suggest
 studying generalizations as the topic for the 2010 summer research project
 \cite{AW} of the first and fourth authors.  

For $\al, \be \in \mathbb C$, define $b_{\al,\be}(n)$ by
\begin{equation}
\begin{gathered}
b_{\al,\be}(1) = \al, \qquad  b_{\al,\be}(2) = \be, \\
b_{\al,\be}(2n) = b_{\al,\be}(n) +b_{\al,\be}(n+1) \ \text{for $n \ge 2$}, \quad
b_{\al,\be}(2n+1) = b_{\al,\be}(n) \ \text{for $n \ge 1$}.
\end{gathered}
\end{equation}
(In order for the recurrence to be unambiguous, it can only apply
starting at $n=3$; the value of $b_{\al,\be}(0)$ plays no role.)
It is proved in \cite{D} that $b_{0,1}(n+2) = f_{\{0,2,3\},2}(n)$ for
$n \ge 0$. It was also proved there by an argument  similar to the
proof of Theorem \ref{rec} that $b_{0,1}(n) \equiv b_{0,1}(n+7)\mod 2$, and is odd
when $n \equiv 0,2,3,4 \pmod{7}$. This suggested looking at 
$f_{\{0,1,3\},2}(n)$, which is also periodic with period 7, and is odd
when $n \equiv 0,1,2,4 \pmod{7}$. The proofs of these facts are now
straightforward in view of Theorem \ref{MainTheorem}: we have in
$\mathbb F_2(x)$: 
\begin{equation}\label{E:md}
\begin{gathered}
F_{\{0,2,3\}}(x) = \frac 1{1+x^2+x^3} =
\frac{(1+x+x^3)(1+x)}{1+x^7} = \frac {1+x^2+x^3+x^4}{1+x^7}; \\
F_{\{0,1,3\}}(x) = \frac 1{1+x+x^3} =
\frac{(1+x^2+x^3)(1+x)}{1+x^7} = \frac {1+x+x^2+x^4}{1+x^7}. 
\end{gathered}
\end{equation}
Thus, $\{0,2,3\}' = \{0,2,3,4\}$ and  $\{0,1,3\}' =
\{0,1,2,4\}$. 
\end{example}

\begin{example}
For $r \ge 2$, let $\mathcal A_r =
\{0,1,2,\dots,2^r\}$ and $\mathcal B_r = \{0,1,3,\dots,2^r - 1\}$ and
let $g_r = \phi_{\mathcal A_r}$ and  $h_r = \phi_{\mathcal B_r}$ for
short. Then $g_r(x) = 1 + x h_r(x)$, so in $\mathbb F_2[x]$,
\begin{equation}
\begin{gathered}
g_r(x)h_r(x) = h_r(x) + xh_r^2 (x) = h_r(x) + xh_r(x^2) = \\
1 + \sum_{\ell=1}^r x^{2^\ell-1} + x + \sum_{\ell=1}^r
x^{2^{\ell+1}-2+1} = 1 + x^{2^{r+1}-1}.
\end{gathered}
\end{equation}
This in itself does not establish that $\mathcal A_r, \mathcal B_r$
 are complementary, or
that they both have period $2^{r+1}-1$. However, if they didn't, their
period would have to be a proper factor of $2^{r+1}-1$, which being
odd, would be at most $\frac 13 (2^{r+1}-1) < 2^r-1< 2^r$, a
contradiction. Thus $g_r$ and $h_r$ each have period $2^{r+1}-1$. We may
interpret this result combinatorially: $f_{\mathcal A_r,2}(n)$ is the
number of ways to write  
\begin{equation}
n = \sum_{i=0}^\infty \ep_i 2^{i + k_i},
\end{equation}
where $\ep_i \in \{0,1\}$ and $0 \le k_i \le r$, and $f_{\mathcal A_r,2}(n)$ 
is even, except when there exists $\ell < r$ so that
$n \equiv 2^{\ell} -1 \pmod{2^{r+1}-1}$. 
\end{example}

\begin{example}\label{0149}
We return to  $\mathcal A = \{0,1,4,9\}$; in $\mathbb F_2[x]$,
\begin{equation}\label{E:fact}
\phi_{\mathcal A}(x) = 1 + x + x^4 + x^9 = (1+x)^4(1+x+x^2)(1+x^2+x^3).
\end{equation}
Note that $1+x$ has period 1,
$1+x+x^2$ has period 3, and we have already seen that $1+x+x^3$ has
period 7. Since the maximum exponent in \eqref{E:fact} is $\le 2^2$,
Lemma \ref{divide} implies that the period of $\mathcal A$ divides $4\cdot
\text{lcm}(1,3,7) = 84$. Another calculation shows that
$\phi_{\mathcal A}(x)$ does not divide $1 + x^{\frac{84}p}$ for $p =
2,3,7$, and so 84 is actually the period. A computation shows that 
$\mathcal A' = \{0,1,2,3,\dots,70,75\}$ has 41 terms, as noted
earlier. Thus $f_{\mathcal A}(n)$ is odd $\frac{41}{84}$ of the time
and even $\frac{43}{84}$ of the time.  
\end{example}

\begin{example}
Although  Theorem \ref{MainTheorem} does not
generalize to all $\mathcal A$ if $(b,d) \neq (2,2)$,  there
are a few exceptional cases. Problem B2 on the 1983 Putnam \cite{KAH}
in effect asked for a proof that for $\mathcal A = \{0,1,2,3\}$, 
\begin{equation}
f_{\mathcal A,2}(n) = \Big\lfloor \frac n2 \Big\rfloor + 1.
\end{equation}
This can be seen directly from (1.4), since $\phi_{\mathcal A,2}(x) =
(1+x)(1+x^2) = \frac{1-x^4}{1-x}$, hence $F_{\mathcal A,2}(x)$
telescopes to $\frac 1{(1-x)(1-x^2)}$. It follows immediately that
$f_{\mathcal A,2}(n+2d)  = f_{\mathcal A,2}(n)  + d$,  and hence $f_{\mathcal A, 2}$ is
periodic mod $d$ for each $d$, with period $2d$. 
A similar phenomenon occurs for $\mathcal A_b = \{0,1,\dots,b^2-1\}$, so
that $\phi_{\mathcal A_b,b}(x) = \frac{1-x^{b^2}}{1-x}$ and $F_{\mathcal
  A_b,b}(x) = \frac 1{(1-x)(1-x^b)}$, implying that $f_{\mathcal A_b,b}(n)
= \lfloor \frac nb \rfloor + 1$ and $f_{\mathcal A_b,b}(n+bd)  =
f_{\mathcal A_b,b}(n)  + d$. 
\end{example}

\begin{example}
Let $\mathcal A = 0 \cup (2\mathbb N + 1)$ (all
non-zero digits in \eqref{E:basicrep} are odd). Then
\begin{equation}
\phi_{\mathcal A}(x) = 1 + \sum_{i=0}^\infty x^{2i+1} = 1 + \frac
x{1-x^2} = \frac{1+x-x^2}{1-x^2}. 
\end{equation}
Working in $\mathbb F_2(x)$, we have
\begin{equation}
F_{\mathcal A,2}(x) = \frac{1-x^2}{1+x-x^2} = \frac{(1+x)^2}{1+x+x^2}
= 1 + \frac{x}{1+x+x^2} = 1 + \frac{x+x^2}{1+x^3}.
\end{equation}
Thus, $f_{\mathcal A,2}(n)$ is odd if and only if $n = 0$ or $n$ is
not a multiple of 3. 
\end{example} 

\begin{example}
Let $\mathcal A^{\{k\}} := \mathbb N \setminus
\{k\}$. By Theorem \ref{MainTheorem},
\begin{equation}
\begin{gathered}
\phi_{\mathcal  A^{\{k\}}} = \frac 1{1+x} -  x^k = \frac{1 -
  x^k-x^{k+1}}{1+x} \implies  F_{\mathcal  A^{\{k\}}}(x) = \frac
{1+x}{1+x^k+x^{k+1}} \\ 
\implies  F_{\mathcal  A^{\{1\}}}(x) = \frac {1+x}{1+x+x^2} = \frac
{(1+x)^2}{1+x^3} = \frac{1 + x^2}{1+x^3}.
\end{gathered}
\end{equation}
Thus $f_{\mathbb N \setminus \{1\},2}(n)$ is odd precisely when $n \equiv 0,2
\pmod{3}$. This may be contrasted with with $f_{\{0,1,2\},2}(n)$,
which is odd precisely when $n \equiv 0,1 \pmod{3}$.
\end{example}

\section{Bibliography}


\begin{thebibliography}{23}

\bibitem{AW} K. Anders and J. Weber, \emph{The parity of generalized
    binary representations}, REGS report, Aug. 30, 2010. 



\bibitem{B} E. Berlekamp, \emph{Algebraic coding theory, Revised 1984
    edition} Aegean Park Press, Laguna Hills, CA, 1984. MR0238597 (38
  \#6873) (review of first edition).

\bibitem{BZ} R. P. Brent and P. Zimmerman, \emph{The great trinomial
    hunt}, Trans. Amer. Math. Soc., \textbf{58} (2011), 233-239.

\bibitem{C} R. F. Churchhouse, \emph{Congruence properties of the
    binary partition function}, Proc. Cambridge
  Philos. Soc., \textbf{66} (1969), 371–376, MR0248102 (40 \#1356).

\bibitem{D} M. Dennison, \emph{A sequence related to the Stern
    sequence}, Ph.D. dissertation, University of Illinois at
  Urbana-Champaign, 2010.

\bibitem{KAH} L. F. Klosinski, G. L. Alexanderson and A. P. Hillman,
  \emph{The William Lowell Putnam Mathematical Competition},
  Amer. Math. Monthly, \textbf{91} (1984), 487–495,  MR1540495. 

\bibitem{L} D. H. Lehmer, On Stern's diatomic series, {\it
  Amer. Math. Monthly},  {\bf 36} (1929), 59--67, MR1521653.



\bibitem{Re1} B. Reznick, \emph{Some binary partition functions},  Analytic
  number theory (Allerton Park, IL, 1989),  451–477, Progr. Math., 85,
  Birkhäuser Boston, Boston, MA, 1990, MR1084197 (91k:11092).

\bibitem{Re2} B. Reznick, \emph{Regularity properties of the Stern
    enumeration of the rationals},  J. Integer Seq. \textbf{11} (2008), no. 4,
  Article 08.4.1, MR2447843 (2009g:11016). 

\bibitem{RS} \O. J. R{\o}dseth and J. E. Sellers, \emph{On $m$-ary partition
    function congruences: a fresh look at a past problem}, J. Number
  Theory, {\bf 87} (2001), 270–281, MR1824148 (2001m:11177).


\bibitem{Ste} M. A. Stern, \emph{Ueber eine zahlentheoretische Funktion},
  J. Reine Angew. Math., {\bf 55} (1858) 193--220.




\end{thebibliography}
\end{document}